\documentclass[12pt]{article}
\usepackage{amssymb, amsmath, amsthm}

\newcommand{\R}{\mathbb R}
\newcommand{\N}{\mathbb N}
\newcommand{\Z}{\mathbb Z}

\newtheorem{theorem}{Theorem}
\newtheorem{lemma}{Lemma}

\begin{document}
\title{Conjugacy Growth in Polycyclic Groups}
\author{M. Hull}
\maketitle
\begin{abstract}
In this paper, we consider the conjugacy growth function of a group, which counts the number of conjugacy classes which intersect a ball of radius $n$ centered at the identity. We prove that in the case of virtually polycyclic groups, this function is either exponential or polynomially bounded, and is polynomially bounded exactly when the group is virtually nilpotent. The proof is fairly short, and makes use of the fact that any polycyclic group has a subgroup of finite index which can be embedded as a lattice in a Lie group, as well as  exponential radical of Lie groups and Dirichlet's approximation theorem.
\end{abstract}

\section{Introduction}
Given a finitely generated group $G$, along with some fixed finite generating set $S$, let $B(n)$ denote the ball of radius $n$ centered at the identity with respect to the standard word metric induced by $S$, and denote the corresponding norm function by $\vert\cdot\vert_G$.  We denote the {\em conjugacy growth function} of $G$ by $\gamma_G^c(n)$, defined as 
\[
\gamma_G^c(n)=\#\text{conjugacy classes } C \text{ such that } C\cap B(n)\neq\emptyset
\]

This function was introduced  to count closed geodesics up to free homotopy on complete Riemannian manifolds \cite{IB}; a more complete history of this function can be found in \cite{GS}. We will also use $\gamma_G(n)$ to denote the standard growth function, that is $\gamma_G(n)=\#B(n)$. We consider these functions up to the following equivalence; $f\preceq g$ if there exist constants $C$ and $D$ such that $f(n)\leq Cg(Dn)$  $\forall n\in\N$, and $f\sim g$ if $f\preceq g$ and $g\preceq f$. We say that $f$ is {\em polynomially bounded} if $f\preceq n^d$ for some $d\in\N$ and $f$ is {\em exponential} if $f\sim 2^n$. Note that unlike the standard growth function, polynomially bounded does not imply polynomial in the case of conjugacy growth functions \cite[Example 2.4]{GS}. In \cite{GS}, it was conjectured that every amenable group of exponential growth has exponential conjugacy growth, and an amenable growth of polynomial conjugacy growth is virtually nilpotent. Our main result gives a proof of this conjecture in the case of virtually polycyclic groups.
\begin{theorem}
Let $P$ be a virtually polycyclic group. Then the conjugacy growth function of $P$ is either polynomially bounded or exponential. Furthermore, the conjugacy growth function of $P$ is polynomially bounded if and only if $P$ is virtually nilpotent. 
\end{theorem}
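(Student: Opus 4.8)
The plan is as follows. One direction is immediate: if $P$ is virtually nilpotent then it has polynomial growth by the Bass--Guivarc'h--Wolf theorem, and since a conjugacy class meeting $B(n)$ contains an element of $B(n)$ we always have $\gamma_P^c\preceq\gamma_P$, so $\gamma_P^c$ is polynomially bounded. As also $\gamma_P^c\preceq\gamma_P\preceq 2^n$ for every finitely generated group, the entire theorem follows once we prove: \emph{if $P$ is virtually polycyclic and not virtually nilpotent, then $\gamma_P^c\sim 2^n$} (this gives the dichotomy and, being incompatible with polynomial boundedness since $2^n\not\preceq n^d$, also the ``only if''). Before starting I would record two monotonicity facts. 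First, $\gamma^c$ does not increase under passing to a quotient: a conjugacy class of $G/N$ meeting the $n$-ball lifts to a conjugacy class of $G$ meeting the $n$-ball, and distinct classes downstairs come from distinct classes upstairs. Second, $\gamma_H^c\preceq\gamma_G^c$ whenever $[G:H]<\infty$, since each $G$-conjugacy class meeting $H$ decomposes into at most $[G:H]$ many $H$-conjugacy classes. Thus $P$ may be replaced freely by finite-index subgroups and by quotients of finite-index subgroups, and we may assume $P=\Gamma$ is torsion-free polycyclic and not virtually nilpotent.

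Next I would invoke the structure theory. By Mostow's theorem, after passing to a finite-index subgroup, $\Gamma$ embeds as a lattice in a simply connected solvable Lie group $G$. Since $\Gamma$ is not virtually nilpotent it has exponential growth (Milnor--Wolf), so $G$ does not have polynomial volume growth, and hence its exponential radical $R$ is a nontrivial closed connected simply connected nilpotent normal subgroup. Then $\Lambda:=\Gamma\cap R$ is a lattice in $R$ — a finitely generated torsion-free nilpotent normal subgroup of $\Gamma$ — and it is exponentially distorted in $\Gamma$; concretely, some $\gamma\in\Gamma$ acts on the torsion-free abelianization $V:=\Lambda^{\mathrm{ab}}/(\text{torsion})\cong\Z^k$ by an automorphism $\phi\in GL_k(\Z)$ with an eigenvalue of modulus strictly greater than $1$ (an eigenvalue off the unit circle forces one of modulus $>1$ because $|\det\phi|=1$). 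Passing to the quotient of $\Gamma$ that kills $[\Lambda,\Lambda]$ and the torsion of $\Lambda^{\mathrm{ab}}$ — all normal in $\Gamma$ — and then, after a finite-index reduction, to a cyclic quotient of $\Gamma/\Lambda$ through which $\gamma$ still has infinite order, I would reduce to proving $\gamma_H^c\sim 2^n$ for $H=\Z^k\rtimes_\phi\Z$ with $\phi$ as above.

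Now the count. Writing elements of $H$ as $vt^m$ with $v\in\Z^k$ and $t$ the stable letter, a direct computation gives $(ut^s)(vt^m)(ut^s)^{-1}=(\phi^s v+(I-\phi^m)u)\,t^m$, so $vt^m$ and $v't^m$ are conjugate precisely when $v'\equiv\phi^s v\pmod{(I-\phi^m)\Z^k}$ for some $s$; thus for each $m\ge 1$ the conjugacy classes of ``height $m$'' correspond bijectively to the $\langle\phi\rangle$-orbits on $Q_m:=\Z^k/(I-\phi^m)\Z^k$. Since $\phi^m$ acts trivially on $Q_m$, these orbits have size at most $m$, so there are at least $|Q_m|/m=|\det(I-\phi^m)|/m$ of them; and since $\phi$ has an eigenvalue of modulus $>1$ while $|\det\phi|=1$, one gets $|\det(I-\phi^m)|=\prod_{\nu\in\operatorname{Spec}\phi}|1-\nu^m|\ge c^m$ for some $c>1$ and all $m$ in a syndetic set — this is where Dirichlet's approximation theorem enters, to keep the factors coming from unit-modulus eigenvalues bounded away from $0$ (after excluding the finitely many residues killed by root-of-unity eigenvalues). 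It remains to see that each conjugacy class of height $m$ has a representative of word length $O(m)$: using the exponential distortion of $\Z^k$ in $H$, an element $vt^m$ with $v=\sum_{j=0}^{m-1}\phi^j c_j$ can be written as a ``Horner'' word of length $O(m)$, so it suffices that every coset of $(I-\phi^m)\Z^k$ is hit by such a $v$ with the $c_j$ ranging over a fixed finite set — again a Dirichlet/pigeonhole argument, amounting to a bounded-digit numeration system for $Q_m$ relative to $\phi$. Combining, $\gamma_H^c(Cm)\gtrsim c^m/m$, so $\gamma_H^c\sim 2^n$, and hence $\gamma_P^c\sim 2^n$.

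I expect two steps to require real care. The first is the reduction in the second paragraph: conjugacy growth is monotone only for quotients and finite-index subgroups, \emph{not} for arbitrary subgroups, so $H$ must be produced as a quotient of a finite-index subgroup of $\Gamma$; routing everything through the exponential radical — a characteristic subgroup — and its characteristic sections is exactly what makes this possible, but keeping track of the successive quotients and finite-index passages is fiddly. The genuine obstacle, though, is the ``short representatives'' claim: one must show that the exponentially large finite group $Q_m=\Z^k/(I-\phi^m)\Z^k$ is covered by words of length linear in $m$. This should follow from $\phi$ being expanding on some eigendirection, so that Horner expressions of length $O(m)$ already surject onto $Q_m$, and it is here — together with the lower bound on $|\det(I-\phi^m)|$ — that the Diophantine input does the real work.
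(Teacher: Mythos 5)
Your overall framing (reduce to the non-virtually-nilpotent case, embed a finite-index subgroup as a lattice, use the exponential radical and a Diophantine input) matches the paper, but your counting strategy is genuinely different and, as written, has two real gaps. The first is the reduction to $H=\Z^k\rtimes_\phi\Z$. Conjugacy growth is indeed monotone under quotients and finite-index subgroups, but you need $H$ to actually arise as a quotient of a finite-index subgroup of $\Gamma$, and in general it does not: if $M\lhd\Gamma_2$ satisfies $M\cap V=1$ for your normal $V\cong\Z^k$, then $[M,V]\le M\cap V=1$, so $M$ centralizes $V$. Hence when $C_{\Gamma}(V)=V$ (e.g.\ $\Z^k\rtimes\Z^2$ with both generators of $\Z^2$ acting without nonzero fixed vectors, after any finite-index passage), there is no normal subgroup you can kill to cut $\Gamma/V$ down to the cyclic group generated by your chosen $\gamma$. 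You would instead have to count twisted conjugacy classes in the coset $V\gamma$ inside $\Gamma$ itself, where the acting group is the full preimage of $C_{\Gamma/V}(\bar\gamma)$ and its orbits on $\Z^k/(I-\phi^m)\Z^k$ are no longer bounded by $m$; your lower bound $|Q_m|/m$ then evaporates. The second gap is the ``short representatives'' claim: that every coset of $(I-\phi^m)\Z^k$ is hit by a Horner word $\sum_{j<m}\phi^jc_j$ with digits in a fixed finite set. This is a genuine surjectivity statement about a bounded-digit numeration system for the exponentially large group $Q_m$, delicate precisely when $\phi$ has contracting or unit-modulus eigenvalues (the lattice $(I-\phi^m)\Z^k$ is then very skew relative to the long thin box your words fill), and you do not prove it. The lower bound $|\det(I-\phi^m)|\ge c^m$ on a syndetic set also needs equidistribution of the unit-circle eigenvalues, not Dirichlet's theorem, which pushes $\nu^m$ \emph{toward} $1$ rather than away from it.

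It is worth seeing how the paper sidesteps both problems. It never leaves the ambient lattice $P_0$: it picks a single nontrivial $v\in Exp(\Gamma)\cap Z(\Delta)$, uses Dirichlet's theorem (in its natural direction) to find an integer point $u$ with $tqv-tu$ bounded for all $1\le t\le m=2^{n/(k+1)}$, and concludes $|tu|_{P_0}\le Cn$ because $tqv$ lies on a strictly exponentially distorted one-parameter subgroup --- so no covering of $Q_m$ by short words is ever needed. Non-conjugacy of the $tu$'s is then immediate from the observation that conjugation acts on $P_0\cap Z(\Delta)\cong\Z^k$ through $GL(k,\Z)$, whose elements have no rational eigenvalues other than $\pm1$ --- so no control of the full conjugation action, and hence no quotient reduction, is required. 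If you want to pursue your route (which is close in spirit to the Breuillard--Cornulier approach for solvable groups), the two flagged steps are exactly where the real work lies and they must be done in $\Gamma$ itself, not in a hoped-for quotient $\Z^k\rtimes_\phi\Z$.
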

This theorem is an extension of a well known result for the ordinary growth function proved by Wolf in \cite{W}.
The author would like to mention that he is aware of an independent proof  of this result by Emmanuel Breuillard and Yves De Cornulier for the more general case of virtually solvable groups. The author would also like to thank Denis Osin for providing some important ideas in the proof, and Curt Kent for many helpful conversations.

\section{Proofs}
Recall that if $P$ is virtually nilpotent, then $\gamma_P$ is polynomial \cite{B}. Since clearly $\gamma_P^c\leq\gamma_P$, we have that $\gamma_P^c$ is polynomially bounded. Thus, it remains to show that if $P$ is not virtually nilpotent, then $\gamma_P^c$ is exponential. Since for any group $G$, $\gamma_G^c\leq\gamma_G\preceq2^n$, we only need show that $2^n\preceq\gamma_P^c$. We begin with the following straightforward lemma.
\begin{lemma}
Let $H$ be a subgroup of finite index in $G$. Then $\gamma_H^c\preceq\gamma_G^c$.
\end{lemma}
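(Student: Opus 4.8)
The plan is to exhibit, for each $n$, a map from the set of $H$-conjugacy classes meeting $B_H(n)$ to the set of $G$-conjugacy classes meeting a ball of comparable radius in $G$, and then to show that this map is boundedly-to-one with bound depending only on $[G:H]$. First I would fix finite generating sets $S_H$ of $H$ and $S_G$ of $G$ (here $H$ is finitely generated because it has finite index). Since $S_H\subseteq G=\langle S_G\rangle$, every generator in $S_H$ is a word in $S_G^{\pm1}$, so there is a constant $M$ with $|h|_G\le M|h|_H$ for all $h\in H$. Consequently, if an $H$-conjugacy class meets $B_H(n)$ then the $G$-conjugacy class containing it meets $B_G(Mn)$. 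Since $H$-conjugate elements are automatically $G$-conjugate, the assignment $[h]_H\mapsto[h]_G$ therefore restricts to a well-defined map
\[
\Phi_n\colon\{\,H\text{-conjugacy classes meeting }B_H(n)\,\}\longrightarrow\{\,G\text{-conjugacy classes meeting }B_G(Mn)\,\}.
\]

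The main point is that $\Phi_n$ is at most $k$-to-one, where $k=[G:H]$; equivalently, that any single $G$-conjugacy class contains at most $k$ distinct $H$-conjugacy classes. To prove this, fix coset representatives $t_1,\dots,t_k$ with $G=\bigsqcup_i Ht_i$, and for each $G$-conjugacy class $C$ meeting $H$ fix some $h_C\in C\cap H$. If $h\in H$ lies in $C$, write $h=gh_Cg^{-1}$ and then $g=h't_i$ with $h'\in H$; then $(h')^{-1}h h'=t_i h_C t_i^{-1}$, which lies in $H$ since the left-hand side does. Hence every $H$-conjugacy class inside $C$ is the $H$-class of one of the at most $k$ elements $t_i h_C t_i^{-1}$ that lie in $H$, which gives the claim.

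Putting the two steps together yields $\gamma_H^c(n)\le k\,\gamma_G^c(Mn)$ for all $n$, i.e.\ $\gamma_H^c\preceq\gamma_G^c$. I expect the only delicate part to be the class-fusion count: verifying that the transversal computation genuinely lands in $H$, and that the bound $k$ is uniform in $n$. The word-length comparison in the first step is routine, and everything else is bookkeeping.
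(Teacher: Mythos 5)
Your proof is correct and follows essentially the same route as the paper: compare word lengths via the generating sets, then bound the number of $H$-conjugacy classes fusing into a single $G$-class using a coset transversal. Your transversal computation ($(h')^{-1}hh' = t_ih_Ct_i^{-1}$) is in fact written out more carefully than the paper's version and yields the slightly sharper bound $[G:H]$ in place of $[G:H]+1$.
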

\begin{proof}
Let $S$ and $T$ be finite generating sets for $G$ and $H$ respectively. Let $k=\max\{\vert t\vert_G: t\in T\}$. Let $g_1,...g_m$ be a set of left coset representatives for $H$. Suppose $h_1$, and $h_2$ are conjugate in $G$ but not in $H$. Then for some $g_i$ and $h\in H$, $g_ihh_1h^{-1}g_i^{-1}=h_2$. Thus, $g_i$ combines the conjugacy classes of $h_1$ and $h_2$;  it follows that if $h\in H$ and $\vert h\vert_H\leq n$, then $\vert h\vert_G\leq kn$ and the  conjugacy class of $h$ in $G$ splits into at most $m+1$ conjugacy classes in $H$. Thus, $\gamma_H^c(n)\leq(m+1)\gamma_G^c(kn)$.
\end{proof}
Given a connected Lie group $\Gamma$ generated by a compact, symmetric neighborhood of the identity $\Omega$, we define $\vert g\vert_{\Gamma}=\inf\{n:g\in\Omega^n=\Omega . . . \Omega\}$. Since this is independent (up to quasi-isometry) of the choice of $\Omega$, we refer to this as the {\em canonical norm} on $\Gamma$. This norm induces a metric in the usual way, that is $dist(g,h)=\vert g^{-1}h\vert_{\Gamma}$. Note that this metric is quasi-isometric to the metric induced by any left-invariant Riemannian structure on $\Gamma$. Also, if $\Delta$ is a connected Lie subgroup of $\Gamma$ with canonical norm $\vert\cdot\vert_{\Delta}$, then there exists a universal constant $D$ such that $\vert h\vert_{\Gamma}\leq D\vert h\vert_{\Delta}$ for all $h\in\Delta$. 

We define the {\em exponential radical} of $\Gamma$, denoted $Exp(\Gamma)$ to be the set of all $g\in\Gamma$ satisfying
\[
\frac{1}{c}\log(\vert n\vert+1)-\epsilon\leq \vert g^n\vert_{\Gamma}\leq c\log(\vert n\vert+1)+\epsilon
\]

for some constants $c, \epsilon$ and for all $n\in\Z$. Such $g$ are called {\em strictly exponentially distorted}. For convenience, we consider $1$ to be strictly exponentially distorted. A construction of the exponential radical is given in \cite{O}.

We will also make use of Dirichlet's approximation theorem, which can be found in \cite[Theorem 200]{HW}.
\begin{theorem}
For any real numbers $c_1,..., c_k$, and any integer $m>0$, there exist integers $q, p_1,...,p_k$ such that $\vert q\vert\le m^k$ and $\vert qc_i-p_i\vert<\frac{1}{m}$.
\end{theorem}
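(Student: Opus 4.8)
The plan is to prove this by the pigeonhole principle (the classical ``box principle'' argument, which is why the statement bears Dirichlet's name). First I would move everything onto the torus: writing $\{x\}$ for the fractional part of a real number $x$, so that $\{x\}\in[0,1)$, I would consider the $m^k+1$ points
\[
v_q=(\{qc_1\},\dots,\{qc_k\})\in[0,1)^k,\qquad q=0,1,\dots,m^k .
\]
The idea is that these points are too many to be spread out in the unit cube, so two of them must be close, and the difference of the corresponding indices will be the desired $q$.

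Next I would partition $[0,1)^k$ into the $m^k$ half-open subcubes $\prod_{i=1}^{k}[a_i/m,(a_i+1)/m)$ with $a_i\in\{0,\dots,m-1\}$; using half-open intervals is what makes these subcubes pairwise disjoint and exactly cover $[0,1)^k$, so there are precisely $m^k$ of them. Since the $m^k+1$ points $v_0,\dots,v_{m^k}$ lie in these $m^k$ boxes, the pigeonhole principle gives indices $0\le q_1<q_2\le m^k$ with $v_{q_1}$ and $v_{q_2}$ in the same subcube. I would then set $q=q_2-q_1$ and $p_i=\lfloor q_2c_i\rfloor-\lfloor q_1c_i\rfloor$. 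Then $1\le q\le m^k$, so $|q|\le m^k$; and for each $i$ the numbers $\{q_1c_i\}$ and $\{q_2c_i\}$ lie in a common interval of length $1/m$, so $|\{q_2c_i\}-\{q_1c_i\}|<1/m$, while $\{q_2c_i\}-\{q_1c_i\}=(q_2c_i-\lfloor q_2c_i\rfloor)-(q_1c_i-\lfloor q_1c_i\rfloor)=qc_i-p_i$, giving $|qc_i-p_i|<1/m$ as required.

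Since the argument is entirely elementary, there is no genuine obstacle here; the only points requiring a little care are the bookkeeping with the boxes — one must use half-open subcubes so that exactly $m^k$ of them cover $[0,1)^k$, to be matched against the $m^k+1$ chosen points — and checking that the produced integer $q=q_2-q_1$ is indeed nonzero (it is, since $q_1<q_2$) and satisfies $q\le m^k$ (it does, since $0\le q_1<q_2\le m^k$).
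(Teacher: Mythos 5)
Your proof is correct. The paper does not actually prove this statement --- it is quoted verbatim from Hardy and Wright \cite[Theorem 200]{HW} --- and the pigeonhole argument you give (partitioning $[0,1)^k$ into $m^k$ half-open subcubes of side $1/m$, placing the $m^k+1$ points $v_0,\dots,v_{m^k}$ among them, and taking the difference of two indices landing in the same box) is precisely the classical proof found there, with the bookkeeping details ($q=q_2-q_1\ge 1$, $q\le m^k$, and $qc_i-p_i=\{q_2c_i\}-\{q_1c_i\}$) all handled correctly.
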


Now we are ready to prove the main theorem.

\begin{proof}
Let $P$ be a group which is virtually polycyclic but not virtually nilpotent. Then $P$ has a subgroup $P_0$ of finite index which can be embedded as a lattice into a connected, simply-connected, solvable Lie group $\Gamma$ \cite[Theorem 4.28]{R}. Furthermore, since every lattice in a connected, solvable Lie group is uniform \cite[Theorem 3.1]{R}, this embedding is a quasi-isometry; this is a consequence of the Milnor-\v{S}varc Lemma (see, for example, \cite{S}). In view of Lemma 1, it suffices to show that $\gamma_{P_0}^c$ is at least exponential.

Let $\Delta$ be the maximal connected nilpotent normal subgroup of $\Gamma$, and $Z(\Delta)$ the center of $\Delta$. We identify $P_0$ with its image in $\Gamma$. Then $P_0\cap\Delta$ is a lattice in $\Delta$ \cite[Corollary 3.5]{R}, and furthermore $P_0\cap Z(\Delta)$ is a lattice in $Z(\Delta)$ \cite[Proposition 2.17]{R}. Since $Z(\Delta)$ is invariant under any automorphism of $\Delta$, $Z(\Delta)\lhd\Gamma$, and thus $P_0\cap Z(\Delta)\lhd P_0$. Note that since $P$ is not virtually nilpotent, $P_0$ is also not virtually nilpotent.  It is proved in \cite{O} that if $\Gamma$ contains a lattice which is not virtually nilpotent, then $Exp(\Gamma)\neq\{1\}$, and in particular $Exp(\Gamma)\cap Z(\Delta)\neq\{1\}$. Let $v\in Exp(\Gamma)\cap Z(\Delta)$, $v\ne 1$. Since $Z(\Delta)$ is a connected, simply-connected, abelian Lie subgroup \cite[Chap. III, Sec. 9, Proposition 15]{NB}, we have $Z(\Delta)\cong\R^k$ for some $k$. Thus, we can identify  $P_0\cap Z(\Delta)$ with $\Z^k$ inside of $\R^k$. 

Fix $n\in\N$ sufficiently large. Applying Dirichlet's approximation theorem to the coordinates of $v$ with $m=2^{\frac{n}{k+1}}$,  we can find $q\in\Z$ and $u\in\Z^k$ such that $\vert\vert qv-u\vert\vert\leq\frac{\sqrt{k}}{m}$ where $\vert\vert\cdot\vert\vert$ denotes the standard Euclidean norm. Now, let $t\in\Z$, $1\leq t\leq m$. Then $\vert\vert tqv-tu\vert\vert\leq\sqrt{k}$. Since  the Euclidean norm on $\R^k$ is quasi-isometric to a canonical norm on $Z(\Delta)$ which is a connected Lie subgroup of $\Gamma$, there is a constant $C_0$ such that $\vert tqv-tu\vert_{\Gamma}\leq C_0\vert\vert tqv-tu\vert\vert\leq C_0\sqrt{k}$. Since $v$ is strictly exponentially distorted in $\Gamma$ and $\vert tq\vert\leq m^{k+1}=2^n$, $\vert tqv\vert_{\Gamma}\leq C_1n$ for some constant $C_1$. Thus, there exists a constant $C_2$ such that $\vert tu\vert_{\Gamma}\leq C_2n$. Finally, $P_0$ is embedded quasi-isometrically in $\Gamma$, so there is a constant $C$ such that $\vert tu\vert_{P_0}\leq Cn$. 

Now suppose $w, tw\in\Z^k$ and $gwg^{-1}=tw$ for some $g\in P_0$. Since $\Z^k\lhd P_0$, conjugation by $g$ is an automorphism of $\Z^k$; thus, we can identify this automorphism with a matrix $\phi_g\in GL(k, \Z)$. By the rational root theorem, the only rational eigenvalues of $\phi_g$ are $1$ and $-1$; it follows that $t=\pm 1$. Therefore, $\{tu: 1\leq t\leq m\}$ are all representatives of distinct conjugacy classes in $P_0$. Thus, we have found $2^{\frac{n}{k+1}}$ non-conjugate elements in the ball of radius $Cn$ in $P_0$, so $2^n\preceq\gamma_{P_0}^c$.
\end{proof}

M. Hull \\
Mathematics Department \\
1326 Stevenson Center \\
Vanderbilt University \\
Nashville, TN 37240 \\
email: michael.b.hull@vanderbilt.edu


\begin{thebibliography}{10}
\bibitem{IB} I. K. Babenko, {\em Closed geodesics, asymptotic volume, and characteristics of group growth}, Izv. Akad. Nauk SSSR Ser. Mat., 52:4 (1988), 675Ğ711.  

\bibitem{NB} N. Bourbaki, ``Lie Groups and Lie Algebras," Springer, New York, 1989.

\bibitem{B} H. Bass, {\em The degree of polynomial growth of finitely generated nilpotent groups}. Proc. 
London Math. Soc. (3) 25 (1972), 603Ğ614.

\bibitem{GS} V. S. Guba and M. V. Sapir, {\em On the Conjugacy Growth Function of Groups}. 	arXiv:1003.1293v2 [math.GR] 

\bibitem{HW} G. H. Hardy and E. M. Wright, ``An Introduction to the Theory of Numbers," Oxford University Press Inc., New York, 1979.

\bibitem {O}  D. V. Osin, {\em Exponential Radicals of Solvable Lie Groups},
Journal of Algebra {\bf 248} (2002), 790--805.

\bibitem {R}  M. S. Raghunathan, ``Discrete Subgroups of Lie Groups," Springer-Verlag, Berlin, New York, 1972.

\bibitem{S} A. S. \v{S}varc, {\em Volume invariants of coverings}, Dokl. Akad. Nauk. {\bf 105} (1955), 32-34. [In Russian].

\bibitem{W} J. Wolf, {\em Growth of finitely generated solvable groups and curvature of Riemannian manifolds}, J. Diff. Geom., 2 (I968), 421-446. 

\end{thebibliography}
\end{document}